\numberwithin{equation}{section}
\newtheorem{thm}{Theorem}
\numberwithin{thm}{section}
\newtheorem{prop}[thm]{Proposition}
\newtheorem{cor}[thm]{Corollary}
\theoremstyle{definition}
\newtheorem{exmp}[thm]{Example}
\theoremstyle{remark}
\newtheorem{rem}[thm]{Remark}
\newcommand{\Log}[1]{\operatorname{log}_{\circ {#1}}}
\title[Nowhere-differentiability]{Simple proofs of 
nowhere-differentiability for Weierstrass's
function and cases of slow growth}
\author{Jon Johnsen}
\address{Department of Mathematical Sciences,
Aalborg University, Fredrik Bajers vej 7G, DK-9220 Aalborg {\O}st, Denmark}
\email{jjohnsen@math.aau.dk}
\subjclass[2000]{26A27}
\keywords{nowhere-differentiability, Weierstrass function, lacunary Fourier
series, second microlocalisation
\\[5\jot] {\tt Appeared in 
J.~Fourier Anal.~Appl.~{\bf 16\/} (2010), pp.~17--33.}
}
\begin{document}
\enlargethispage{2\baselineskip}

 \begin{abstract}
Using a few basics from integration theory, a short proof of 
nowhere-differentiability of Weierstrass functions is given. 
Restated in terms of the
Fourier transformation, the method consists in principle 
of a second microlocalisation,
which is used to derive two general results on existence of nowhere
differentiable functions. Examples are given in which the frequencies 
are of polynomial growth and of
almost quadratic growth as a borderline case.
 \end{abstract}
\maketitle
\setlength{\baselineskip}{1.19\baselineskip}

\section{Introduction}
In 1872, K.~Weierstrass presented his famous example of a
nowhere differentiable function $W$ on the real line $\mathbb{R}$.
With two real parameters $b\ge a>1$, this may be written as
\begin{equation}
  W(t)=\sum_{j=0}^\infty  a^{-j}\cos(b^jt), \quad t\in \mathbb{R}.
  \label{W-eq}
\end{equation}
Weierstrass proved that $W$ is continuous at every $t_0\in \mathbb{R}$, 
but not differentiable at any $t_0\in \mathbb{R}$ if
\begin{equation}
  \frac{b}{a}>1+\frac{3\pi}{2},\quad\text{$b$ is an odd integer}.
  \label{W-cnd}
\end{equation}
Subsequently several mathematicians attempted to relax condition
\eqref{W-cnd}, but with limited luck. 
Much later G.~H.~Hardy \cite{Har16} was able to remove it:

\begin{thm}[Hardy 1916]
  \label{W-thm}
For every real number $b\ge  a>1$ the functions 
\begin{equation}
 W(t)=\sum_{j=0}^\infty a^{-j}\cos(b^jt),\qquad
 S(t)=\sum_{j=0}^\infty a^{-j}\sin(b^jt),
\end{equation}
are bounded and continuous on $\mathbb{R}$, but have 
no points of differentiability.
\end{thm}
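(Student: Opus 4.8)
The plan is to dispose of boundedness and continuity at once with the Weierstrass $M$-test: since $a>1$ we have $\sum_{j\ge 0}a^{-j}=\frac{a}{a-1}<\infty$, so both series converge uniformly, whence $W,S$ are continuous and bounded by $\frac{a}{a-1}$. The whole difficulty is therefore the nowhere-differentiability, and here I would exploit the lacunary (geometric) spacing of the frequencies $b^{j}$ by a second-microlocalisation argument: test the function against one fixed profile rescaled to the single scale $b^{-n}$, chosen so that in frequency it ``sees'' only the $n$-th term of the series.

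Fix $t_{0}$ and suppose, for contradiction, that $W$ is differentiable there. I would first isolate the one analytic input as a lemma: if $f$ is bounded and differentiable at $t_{0}$, and $\psi$ is a Schwartz function with vanishing zeroth and first moments, $\int\psi=0$ and $\int s\,\psi(s)\,ds=0$, then $\delta^{-1}\!\int f(t_{0}+\delta s)\,\psi(s)\,ds\to 0$ as $\delta\to 0^{+}$. The proof is just Taylor's formula $f(t_{0}+u)=f(t_{0})+f'(t_{0})u+r(u)$ with $r(u)=o(|u|)$: the constant and linear parts are annihilated by the two moment conditions, and the remainder integral equals $\int \frac{r(\delta s)}{\delta s}\,s\,\psi(s)\,ds$, whose integrand tends to $0$ pointwise and is dominated by $K\,|s\,\psi(s)|$ with $K=\sup_{u}|r(u)/u|<\infty$ thanks to the boundedness of $f$; dominated convergence finishes it.

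The microlocalisation is then a matter of choosing $\psi$ well. Writing $g(\xi)=\int e^{i\xi s}\psi(s)\,ds$, I would pick $\psi$ so that $g\in C_{c}^{\infty}$ is supported in the gap $(b^{-1},b)$ with $g(1)=1$; such $\psi$ is Schwartz, and $g\equiv 0$ near $0$ forces both moment conditions. Expanding $\cos\theta=\tfrac12(e^{i\theta}+e^{-i\theta})$ and integrating term by term (legitimate since $\sum a^{-j}|\psi|\in L^{1}$) gives
\begin{equation}
\int W(t_{0}+b^{-n}s)\,\psi(s)\,ds
 =\sum_{j\ge 0}\tfrac12 a^{-j}\bigl(e^{ib^{j}t_{0}}g(b^{j-n})+e^{-ib^{j}t_{0}}g(-b^{j-n})\bigr).
\end{equation}
Because $g$ lives in the positive interval $(b^{-1},b)$, the arguments $-b^{j-n}$ fall outside its support, while among the positive arguments $b^{j-n}$ the only integer power of $b$ in that interval is $b^{0}=1$; hence every term vanishes except $j=n$, leaving exactly $\tfrac12 a^{-n}e^{ib^{n}t_{0}}$, of modulus $\tfrac12 a^{-n}$. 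Confronting this with the lemma (applied with $\delta=b^{-n}$) yields $\tfrac12 a^{-n}=o(b^{-n})$, i.e.\ $\tfrac12(b/a)^{n}\to 0$, which is absurd since $b\ge a$ forces $(b/a)^{n}\ge 1$. The same $\psi$ works verbatim for $S$.

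I expect the decisive step to be the frequency isolation: recognising that the geometric gaps between the $b^{j}$ leave room for a profile whose spectrum fits between two consecutive scales, so that a single rescaled test integral extracts the $n$-th coefficient cleanly and the resulting bound is uniform in $t_{0}$ and $n$. The remainder control in the lemma is the only genuinely analytic point, and it is exactly where the boundedness of $W$ (not merely its continuity) enters; the borderline case $b=a$ then costs nothing extra, since the contradiction needs only $(b/a)^{n}\not\to 0$.
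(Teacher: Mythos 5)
Your proposal is correct and follows essentially the same route as the paper: a test profile whose Fourier transform is a smooth bump supported in $(b^{-1},b)$ with value $1$ at $1$, rescaled to scale $b^{-n}$, isolates the $n$-th term (the negative-frequency halves of the cosines dying because the bump sits on the positive axis), and dominated convergence applied to the difference quotient forces $(b/a)^{n}\to 0$, contradicting $b\ge a$. Your two vanishing-moment conditions are exactly the paper's $\int\chi=0$ and $\hat\chi{}'(0)=0$, so the only difference is presentational (a pairing and Taylor remainder instead of a convolution and the quotient $F$).
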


The assumption $b\ge  a$ here is optimal for every $a>1$, 
for $W$ is in $C^1(\mathbb{R})$ whenever
$\frac{b}{a}<1$, due to uniform convergence of the derivatives.
(Strangely this was unobserved in \cite[Sect.~1.2]{Har16}, where Hardy
sought to justify the sufficient condition $b\ge a$ as being more natural
than eg \eqref{W-cnd}.)
Hardy also proved that $S'(0)=+\infty $ for 
\begin{equation}
  1<a\le b< 2a-1,   
\end{equation}
so then the graph of $S(t)$ is not rough at $t=0$ (similarly
$W'(\pi/2)=+\infty $ if in addition $b\in 4\mathbb{N}+1$). 
However, Hardy's treatment is not entirely elementary
and yet it fills ca.\ 15 pages.

It is perhaps partly for this reason that 
attempts have been made over
the years to find other examples.
These have often involved a replacement of the sine and cosine above by a 
function with a zig-zag graph, the first one due to
T.~Takagi \cite{Tak03} who
introduced 
$t\mapsto \sum_{j=0}^\infty 2^{-j}\operatorname{dist}(2^jt,\mathbb{Z})$.

However, the price is that the partial
sums are not $C^1$ for such functions, and due to the dilations
every $x\in \mathbb{R}$ is a limit $x=\lim r_N$ where
each $r_N\in\mathbb{Q}$ is a point at which the $N^{\operatorname{th}}$ partial sum has no
derivatives; whence nowhere-differentiability of the sum function   
is less startling.
Nevertheless, a fine example of this sort was given in just
13 lines by J.~McCarthy \cite{McC53}.

Somewhat surprisingly, there is an equally short proof of
nowhere-differentiability for $W$ and $S$, using a few basics of integration
theory. This is explained below in the introduction.

It is a major purpose of this paper to show that the simple
method has an easy extension to large classes of 
nowhere differentiable functions.
Thus the main part of the paper
contains two general theorems, of which at least the last should be a novelty,
and it ends with new examples with slow increase of the frequencies.

\begin{rem}   \label{W-rem}
By a well-known reasoning,
$W$ is nowhere-differentiable
since the $j^{\operatorname{th}}$ term cannot cancel the oscillations of the
previous ones: 
it is out of phase with previous terms as $b>1$
and the amplitudes decay exponentially since $\frac{1}{a}<1$;
as $b\ge a>1$ the combined effect is large enough 
(vindicated by the optimality of
$b\ge a$ noted after Theorem~\ref{W-thm}).
However, it will be shown 
in Section~\ref{slow-sect} that 
frequencies growing almost quadratically suffice 
for nowhere-differentiability.
\end{rem}

To present the ideas in a clearer way, one may consider the following
function $f_\theta$ which (in this paper) serves as a typical nowhere
differentiable function,
\begin{equation}
  f_\theta(t)=\sum_{j=0}^\infty 2^{-j\theta}e^{\operatorname{i} 2^{j}t},
  \qquad 0<\theta\le 1.
\end{equation}
It is convenient to
choose an auxiliary function $\chi\colon \mathbb{R}\to\mathbb{C}$ 
thus: the Fourier transformed function 
$\mathcal{F}\chi(\tau)=\hat \chi(\tau)=\int_\mathbb{R} e^{-\operatorname{i} t\tau}\chi(t)\,dt$ is
chosen as a $C^\infty $-function fulfilling 
\begin{equation}
 \hat \chi(1)=1,\qquad \text{$\hat \chi(\tau)=0$ for 
$\tau\notin\,]\tfrac{1}{2},2[\,$};  
  \label{hatchi-eq}
\end{equation}
for example  by setting
$\hat \chi(\tau)  =
  \exp\Big(2-\frac1{(2-\tau)(\tau-1/2)}\Big)$
for $\tau\in \,]\tfrac{1}{2},2[\,$. 

Using \eqref{hatchi-eq} it is easy to show that 
$\chi(t)=\mathcal{F}^{-1}\hat \chi(t)=\tfrac{1}{2\pi}\int_{\mathbb{R}}e^{\operatorname{i} t\tau}\hat
\chi(\tau)\,d\tau$ is continuous and that 
for each $k\in \mathbb{N}_0$ the function 
$t^k \chi(t)=\mathcal{F}^{-1}(\operatorname{i}^k\hat \chi{}^{(k)})$ is bounded 
(by $\sup|\hat \chi{}^{(k)}|$).
Therefore $\chi$ is integrable, 
ie $\chi\in L_1(\mathbb{R})$, and clearly 
$\int \chi\,dt=\hat \chi(0)=0$.

With this preparation, the function $f_\theta$ is particularly
simple to treat, using only ordinary exercises in integration theory:
First one may introduce the convolution
\begin{equation}
 2^k\chi(2^k\cdot )*f_\theta(t_0)
 =\int_{\mathbb{R}} 2^k\chi(2^kt)f_\theta(t_0-t)\,dt,
 \label{chi*f-eq}
\end{equation}
which is in $L_\infty (\mathbb{R})$ since $f_\theta\in L_\infty (\mathbb{R})$
and $\chi\in  L_1(\mathbb{R})$. Secondly this will be analysed in two different ways
in the proof of

\begin{prop}   \label{ftheta-prop}
For $0<\theta\le 1$ the function $f_\theta(t)
=\sum_{j=0}^\infty  2^{-j\theta}e^{\operatorname{i} 2^j t}$ is a continuous
$2\pi$-periodic, hence bounded function $f_{\theta}\colon 
\mathbb{R}\to\mathbb{C}$ without points of differentiability.
\end{prop}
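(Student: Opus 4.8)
The plan is to handle the three qualitative properties quickly and then devote the work to nowhere-differentiability, which comes out of analysing the convolution in \eqref{chi*f-eq} in two complementary ways. For the qualitative part: since $\sum_{j\ge0}2^{-j\theta}<\infty$ for $\theta>0$, the defining series converges uniformly by the Weierstrass $M$-test, so $f_\theta$ is continuous; each summand $e^{\operatorname{i}2^jt}$ has period $2\pi\cdot2^{-j}$ dividing $2\pi$, whence the sum is $2\pi$-periodic; and $|f_\theta(t)|\le(1-2^{-\theta})^{-1}$ gives boundedness.

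The first analysis would pass to the frequency side. Because $\mathcal{F}(2^k\chi(2^k\cdot))(\tau)=\hat\chi(\tau/2^k)$ is supported in $\,]2^{k-1},2^{k+1}[\,$, convolving this filter against the term $e^{\operatorname{i}2^jt}$ and evaluating at $t_0$ contributes the factor $\hat\chi(2^{j-k})$, which vanishes unless $j=k$ and then equals $\hat\chi(1)=1$. Interchanging sum and integral (legitimate since the series converges uniformly and $\chi\in L_1$) would then yield the exact identity
\begin{equation}
 2^k\chi(2^k\cdot)*f_\theta(t_0)=2^{-k\theta}e^{\operatorname{i}2^kt_0},
\end{equation}
so the left-hand side has modulus exactly $2^{-k\theta}$ for every $k\in\mathbb{N}_0$.

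The second analysis would assume, for contradiction, that $f_\theta$ is differentiable at some $t_0$, and aim at an upper bound incompatible with $2^{-k\theta}$. Writing $f_\theta(t_0-t)=f_\theta(t_0)-t\,f_\theta'(t_0)+\varepsilon(t)$ with $\varepsilon(t)=o(t)$ as $t\to0$, I would use that both moments $\int\chi\,dt=\hat\chi(0)=0$ and $\int t\,\chi(t)\,dt=\operatorname{i}\hat\chi{}'(0)=0$ vanish, the latter because $\hat\chi$ is identically $0$ near the origin. Consequently the constant and linear parts drop out of the convolution, and after the substitution $s=2^kt$ the problem reduces to showing that $2^k\!\int\chi(s)\varepsilon(s/2^k)\,ds\to0$ as $k\to\infty$.

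This remainder estimate is the technical heart and the step I expect to be the main obstacle, since the bound $|\varepsilon(t)|\le\epsilon|t|$ is only valid near $t=0$ whereas the integral runs over all of $\mathbb{R}$. I would split at $|s|=\delta 2^k$: on the inner region the local estimate bounds the contribution by $\epsilon\int|s\chi(s)|\,ds$, which is finite because $t\chi(t)$ is bounded and $\chi$ decays rapidly; on the outer region the crude global bound $|\varepsilon(t)|\le C(1+|t|)$, combined with the rapid decay of $\chi$ (from boundedness of every $t^k\chi(t)$), forces the contribution to $0$ as $k\to\infty$. Letting $\epsilon\to0$ then gives $2^k\chi(2^k\cdot)*f_\theta(t_0)=o(2^{-k})$. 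Since $2^{-k\theta}$ is never $o(2^{-k})$ for $0<\theta\le1$, this contradicts the exact modulus found above, so $f_\theta$ can have no point of differentiability.
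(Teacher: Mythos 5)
Your proof is correct and follows essentially the same route as the paper: both compute $2^k\chi(2^k\cdot)*f_\theta(t_0)=2^{-k\theta}e^{\operatorname{i}2^kt_0}$ exactly from the frequency localisation of $\hat\chi$, and both derive the contradictory upper bound from the two vanishing moments $\int\chi\,dt=\hat\chi(0)=0$ and $\int t\chi(t)\,dt=\operatorname{i}\hat\chi{}'(0)=0$. The only difference is cosmetic: where you subtract the first-order Taylor polynomial and control the remainder by an explicit $\epsilon$--$\delta$ splitting of the integral, the paper packages the same estimate as one application of dominated convergence to the bounded difference quotient $F(-2^{-k}z)$ against the majorant $|z\chi(z)|\sup|F|$.
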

\begin{proof}
By uniform convergence $f_\theta$ is for $\theta>0$
a continuous $2\pi$-periodic and bounded function; this follows from 
Weierstrass's majorant criterion as 
$\sum2^{-j\theta}<\infty $.

Inserting the series defining  $f_\theta$ into \eqref{chi*f-eq},
Lebesgue's theorem on majorised convergence allows the sum and integral to
be interchanged (eg with $\tfrac{2^k}{1-2^{-\theta}}|\chi(2^k t)|$ 
as a majorant), ie
\begin{equation}
  \begin{split}
  2^k\chi(2^k\cdot )*f_\theta(t_0)
  &=\lim_{N\to\infty }\sum_{j=0}^N 2^{-j\theta}
     \int_{\mathbb{R}} 2^k\chi(2^kt)e^{\operatorname{i} 2^j(t_0-t)}\,dt
\\
    &=\sum_{j=0}^\infty 2^{-j\theta}
   e^{\operatorname{i} 2^jt_0}\int_{\mathbb{R}} e^{-\operatorname{i} z2^{j-k}} \chi(z)\,dz
  =2^{-k\theta}e^{\operatorname{i} 2^k t_0}\hat \chi(1)=2^{-k\theta}e^{\operatorname{i} 2^k t_0}.
  \end{split}
  \label{conv-eq}
\end{equation}
Here it was also used that $\hat\chi(2^{j-k})=1$ for $j=k$ and
equals $0$ for $j\ne k$.

Moreover, since $f_\theta(t_0)\int_{\mathbb{R}}\chi\,dz=0$ (cf the note 
prior to the proposition) this gives
\begin{equation}
  2^{-k\theta}e^{\operatorname{i} 2^kt_0}= 2^k\chi(2^k\cdot)*f_\theta(t_0)
  =\int_{\mathbb{R}} \chi(z)(f_\theta(t_0-2^{-k}z)-f_\theta(t_0))\,dz.
  \label{fint-eq}
\end{equation}
So if $f_\theta$ were differentiable at $t_0$, 
$F(h):=\tfrac{1}{h}(f_\theta(t_0+h)-f_\theta(t_0))$ would define a function 
in $C(\mathbb{R})\cap L_\infty (\mathbb{R})$ for which $F(0)=f'(t_0)$, 
and Lebesgue's theorem, applied with
$|z\chi(z)|\sup_{\mathbb{R}}|F|$ as the majorant, would imply that
\begin{equation}
  -2^{(1-\theta)k}e^{\operatorname{i} 2^kt_0}
  = \int F(-2^{-k}z)z \chi(z)\,dz
  \xrightarrow[k\to\infty]{~} f'(t_0)\int_{\mathbb{R}}z\chi(z)\,dz
  =f'(t_0)\operatorname{i} \frac{d\hat \chi}{d\tau}(0)=0;
  \label{Fint-eq}
\end{equation}
hence that $1-\theta<0$. 
This would contradict the assumption that $\theta\le 1$. 
\end{proof}

By now this argument is of course of a classical nature,
although not well established in the literature.
Eg, recently R.~Shakarchi and E.~M.~Stein treated 
nowhere-differentiability of $f_\theta$ in 
Thm.~3.1 of Chap.~1 in their treatise \cite{ShSt03} with a method they
described thus:
``The proof of the theorem is really the story of
three methods of summing a Fourier series\dots  
partial sums\dots Cesaro summability\dots delayed means.''
However, they covered $0<\theta<1$ in a few pages
with
refinements for $\theta=1$ sketched there in Problem~5.8 
based on the Poisson summation formula. 

The present proofs are
not confined to periodic functions (cf the next section), for the theory of
lacunary 
Fourier series is replaced by the Fourier transformation 
$\mathcal{F}$ and its basic properties.

Moreover, also Hardy's theorem can be obtained in this way, with a few
modifications. 
The main point is to keep the factor $e^{\operatorname{i} 2^k t_0}$ instead of
introducing $\cos(2^k t_0)$ and $\sin(2^k t_0)$ 
that appear in $W$ and $S$, but do not a priori
stay away from $0$ as $k\to\infty $. Luckily this difficulty
(which was dealt with at length in \cite{Har16}) 
disappears with the present approach:

\begin{proof}[Proof of Theorem~\ref{W-thm}]
As $a>1$, clearly $W\in C(\mathbb{R})\cap L_\infty$.
Since $b>1$ it may in this proof be arranged that
$\hat \chi(1)=1$ and $\hat \chi(\tau)\ne0$ only for $\tfrac{1}{b}<\tau<b$.
As for $f_\theta$ this gives, by Euler's formula,
\begin{equation}
  b^k\chi(b^k\cdot )*W(t_0)
  =\sum_{j=0}^\infty a^{-j}\int_{\mathbb{R}} b^k\chi(b^kt)\tfrac{1}{2}
    (e^{\operatorname{i} b^j(t_0-t)}+e^{\operatorname{i} b^j(t-t_0)})\,dt.
\end{equation}
The term $e^{\operatorname{i} b^j(t-t_0)}$ is redundant here,
for $z:=tb^k$ yields 
$\int e^{\operatorname{i} b^jt}\chi(b^k t)b^k\,dt=\int e^{\operatorname{i} zb^{j-k}}\chi(z)\,dz
=\hat \chi(-b^{j-k})=0$, as $\hat \chi$ vanishes on $\,]-\infty ,0]$. 
So as in \eqref{conv-eq}, one has
$b^k\chi(b^k\cdot )*W(t_0)=\frac{e^{\operatorname{i} b^k t_0}}{2a^k}$.

Hence existence of $W'(t_0)$ would imply that
$\lim_k(\frac{b}{a})^ke^{\operatorname{i} b^k t_0}=0$;
cf \eqref{fint-eq}--\eqref{Fint-eq}.
This would contradict that $b\ge a$,
so $W$ is nowhere differentiable. Similarly $S(t)$ is so.
\end{proof}

It is known that nowhere-differentiability of $W$ can be derived with 
wavelets, cf \cite{Hol95}; an elementary explanation has been given in 
\cite{BaDu92}, but only for $b>a$.
In comparison the above proofs are short and
cover all cases through
``first principles'' of integration theory. 

In Section~\ref{micro-sect} a general result on
nowhere differentiable functions is given. 
Refining a dilation argument, a
further extension is found in Section~\ref{diff-sect}, including functions with
polynomial frequency growth. Borderline cases with quasi-quadratic growth
are given in Section~\ref{slow-sect}.

\begin{rem}
In the proof of
Theorem~\ref{W-thm}, Lebesgue's theorem on majorised
convergence is the most advanced part.
As this result appeared in 1908, cf \cite[p.~12]{Leb08}, it seems that
the argument above could, perhaps,
have been written down a century ago.
\end{rem}


\section{Proof by microlocalisation}   \label{micro-sect}
To emphasize \emph{why} the proofs of Proposition~\ref{ftheta-prop} and
Theorem~\ref{W-thm} work, the proof of the general 
Theorem~\ref{exp-thm} below
will use the Fourier
transformation $\mathcal{F}$ more consistently.

\bigskip

To apply $\mathcal{F}$ to non-integrable functions, 
it is convenient to use a few elements of
the distribution theory of L.~Schwartz \cite{Swz66}. 
(An introduction to this could be \cite{RiYo90}.)

Recall that
$\mathcal{F}f(\tau)=\hat
f(\tau)=\int_{\mathbb{R}}e^{-\operatorname{i} t\tau} f(t)\,dt$ defines a bijection 
$\mathcal{F}\colon \mathcal{S}(\mathbb{R})\to\mathcal{S}(\mathbb{R})$, when $\mathcal{S}(\mathbb{R})$ denotes the
Schwartz space of rapidly decreasing $C^\infty $-functions. Moreover,
$\mathcal{F}$ extends by duality to the 
space $\mathcal{S}'(\mathbb{R})$ of so-called temperate distributions, which contains
$L_p(\mathbb{R})$ for $1\le p\le \infty $. 
In particular it applies to exponential
functions $e^{\operatorname{i} bt}$, and as a basic exercise this yields 
$2\pi$ times the
Dirac measure $\delta_b$, ie the point measure at $\tau=b$, 
\begin{equation}
  \mathcal{F}(e^{\operatorname{i} b\cdot })(\tau)=2\pi \delta(\tau-b)=2\pi\delta_{b}(\tau).
  \label{Fexp-eq}
\end{equation}

This applies in a discussion of the function 
$$f(t)=\sum_{j=0}^\infty a_je^{\operatorname{i} b_j t}
$$
with general amplitudes $a_j\in\mathbb{C}$ 
and frequencies $0<b_0<b_1<\dots <b_j<\dots $ with $b_j\to\infty $, written 
$0<b_j\nearrow \infty $ for brevity.
(There could be finitely many $b_j\le 0$, but this would only contribute
with a $C^\infty $-term.)

Obviously the condition $\sum_j|a_j|<\infty $ implies $f\in C(\mathbb{R})\cap
L_\infty (\mathbb{R})$, so $f\in \mathcal{S}'(\mathbb{R})$, and since 
$\mathcal{F}$ applies termwise (it is continuous on $\mathcal{S}'(\mathbb{R})$),
one has by \eqref{Fexp-eq}
\begin{equation}
  \mathcal{F}f=\sum_{j=0}^\infty  a_j\mathcal{F}(e^{\operatorname{i} b_j\cdot } )
  = 2\pi\sum_{j=0}^\infty  a_j\delta_{b_j}.
  \label{Fftheta-eq}
\end{equation}
Of course \eqref{Fftheta-eq} just expresses 
that $f$ is synthesized from the frequencies
$b_0, b_1,\dots$

When $\liminf\tfrac{b_{j+1}}{b_j}>1$, then
each frequency may be picked out in a well-known way:
fixing $\lambda\in\,]1,\liminf\tfrac{b_{j+1}}{b_j}[\,$ there is a
$\chi\in \mathcal{S}(\mathbb{R})$ for which $\hat \chi(1)=1$
while $\hat \chi(\tau)\ne0$ only for 
$\tfrac{1}{\lambda}<\tau<\lambda$.  
Then $b_{k}>\lambda b_{k-1}$ 
for all $k\ge K$, if $K$ is chosen appropriately.

Considering only $k\ge K$ in the following,  
one has $\hat \chi(\tau/b_k)\ne0$ only for $\tau\in
\,]\tfrac{b_k}{\lambda};\lambda b_k[\,$.
Because $[\tfrac{b_k}{\lambda};\lambda b_k]\subset \,]b_{k-1};b_{k+1}[\,$
and $(b_k)$ is monotone increasing, 
\begin{equation}
 \hat \chi(\tau/b_k)\delta_{b_j}(\tau)=
\begin{cases}
  0&\text{ for $j\ne k$} \\
  \delta_{b_k} &\text{ for $j=k$.}
\end{cases}
  \label{xd-eq}
\end{equation}
In general $\mathcal{F}(\chi*f)=\hat \chi\cdot \hat f$
holds for all $\chi\in \mathcal{S}(\mathbb{R})$ and 
$f\in L_\infty \subset \mathcal{S}'(\mathbb{R})$, whence
\begin{equation}
 \mathcal{F}(b_k\chi(b_k\cdot )*f)= 
 \hat \chi(\cdot/b_k )\cdot \mathcal{F}f
 =2\pi\sum_{j=0}^\infty a_j\hat \chi(\cdot/b_k )\delta_{b_j}
 =2\pi a_k\delta_{b_k}.
  \label{akdelta-eq}
\end{equation}
So by use of $\mathcal{F}^{-1}$ and \eqref{Fexp-eq},
\begin{equation}
  b_k\chi(b_k\cdot )*f(t)= 2\pi a_k\mathcal{F}^{-1}\delta_{b_k}(t)
  = a_ke^{\operatorname{i} b_k t}.
  \label{conv'-eq}
\end{equation}
This gives back \eqref{conv-eq} in case $a_k=2^{-k\theta}$ and $b_k=2^k$, 
but the derivation above 
is more transparent than eg the proof 
of \eqref{conv-eq}, since it is clear why
convolution by $b_k\chi(b_k\cdot )$ just gives the $k^{\operatorname{th}}$ term.

The process in \eqref{akdelta-eq}--\eqref{conv'-eq} 
has of course been known for ages, but with 
distribution theory it is fully justified although $\mathcal{F}f$ consists
of measures. In principle, it is a banal example of  
what is sometimes called a \emph{second microlocalisation} of $f$, since 
$\hat \chi(b_k\tau)\mathcal{F}f(\tau)$ 
is localised to frequencies $\tau$ restricted in \emph{both} size and direction;
namely to $|\tau|\approx b_k$ and $\tau>0$, respectively.

The second microlocalisation is more visible in a separate treatment of
\begin{equation}
  \operatorname{Re} f(t)=\sum_{j=0}^\infty  a_j\cos(b_j t),
\qquad
  \operatorname{Im} f(t)=\sum_{j=0}^\infty  a_j\sin(b_j t).
\end{equation}
Indeed, by Euler's formula and \eqref{Fexp-eq},
\begin{equation}
  \mathcal{F}\cos(b_j\cdot ) 
  =\tfrac{2\pi}{2}(\delta_{b_j}+\delta_{-b_j}),\qquad
  \mathcal{F}\sin(b_j\cdot )= \tfrac{2\pi}{2\operatorname{i}}(\delta_{b_j}-\delta_{-b_j}).
  \label{Fsincos-eq}
\end{equation}
Here multiplication by $\hat \chi(\cdot/b_j )$ \emph{removes} 
the contribution from $\delta_{-b_j}$ since $\hat \chi$ vanishes on
$\,]-\infty ,0]$.  
This actually explains why the proof of Theorem~\ref{W-thm} was saved by the
redundancy of the term $e^{\operatorname{i} b^j(t-t_0)}$.

However, the details will follow in connection with the next result.
Recall that $f\colon \mathbb{R}\to\mathbb{C}$ is said to be Lipschitz
continuous at $t_0$ if there exist two constants 
$L>0$, $\eta>0$ such that
$|f(t)-f(t_0)|\le L|t-t_0|$ for every $t\in \,]t_0-\eta,t_0+\eta[\,$. 

\begin{thm}
  \label{exp-thm}
Let $f\colon \mathbb{R}\to \mathbb{C}$ be given as 
$f(t)=\sum_{j=0}^\infty a_j \exp({\operatorname{i} b_jt})$
for a complex sequence $(a_j)_{j\in\mathbb{N}_0}$ with $\sum_{j=0}^\infty |a_j|
<\infty$ 
and $   0<b_j\nearrow \infty$ satisfying
\begin{equation}
  \liminf_{j\to\infty } \frac{b_{j+1}}{b_j}>1,\qquad
  a_jb_j\not\to 0\quad\text{for}\quad j\to\infty.
  \label{ab-cnd}
\end{equation}
Then $f$ is bounded and continuous on $\mathbb{R}$, but
nowhere differentiable. If $\sup_{j}|a_j|b_j=\infty $ holds in addition,
then $f$ is not Lipschitz continuous at any point.
The conclusions are also valid
for $\operatorname{Re} f$ and $\operatorname{Im} f$.
\end{thm}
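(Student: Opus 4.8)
The plan is to build directly on the key identity \eqref{conv'-eq}, which was just derived for all $k\ge K$ under the hypothesis $\liminf b_{j+1}/b_j>1$. The whole proof of Proposition~\ref{ftheta-prop} used only two facts: that convolution with $b_k\chi(b_k\cdot)$ picks out the $k$th term, and that differentiability at $t_0$ forces a certain scaled quantity to vanish. Here I would reuse the same two-step structure, replacing the specific $2^{-k\theta}e^{\operatorname{i} 2^k t_0}$ by the general $a_k e^{\operatorname{i} b_k t_0}$. So first I would record that $f\in C(\mathbb{R})\cap L_\infty(\mathbb{R})$ by Weierstrass's majorant criterion, since $\sum|a_j|<\infty$, and that \eqref{conv'-eq} gives $b_k\chi(b_k\cdot)*f(t_0)=a_k e^{\operatorname{i} b_k t_0}$ for all $k\ge K$.

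**Next I would** subtract $f(t_0)\int\chi\,dz=0$ (recall $\int\chi\,dz=\hat\chi(0)=0$, which holds here too since $\hat\chi$ vanishes outside $]1/\lambda,\lambda[\,$) to obtain, exactly as in \eqref{fint-eq},
\begin{equation}
  a_k e^{\operatorname{i} b_k t_0}=\int_{\mathbb{R}}\chi(z)\bigl(f(t_0-b_k^{-1}z)-f(t_0)\bigr)\,dz.
\end{equation}
If $f$ were differentiable at $t_0$, then $F(h):=\tfrac1h(f(t_0+h)-f(t_0))$ lies in $C(\mathbb{R})\cap L_\infty(\mathbb{R})$ with $F(0)=f'(t_0)$, and substituting $h=-b_k^{-1}z$ turns the right-hand side into $-b_k^{-1}\int F(-b_k^{-1}z)\,z\chi(z)\,dz$. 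Hence
\begin{equation}
  -a_k b_k e^{\operatorname{i} b_k t_0}=\int_{\mathbb{R}} F(-b_k^{-1}z)\,z\chi(z)\,dz
  \xrightarrow[k\to\infty]{~} f'(t_0)\int_{\mathbb{R}} z\chi(z)\,dz=0,
\end{equation}
by Lebesgue's majorised convergence with majorant $|z\chi(z)|\sup|F|$ (noting $z\chi(z)\in L_1$ since $t^k\chi$ is bounded for every $k$), and $\int z\chi\,dz=\operatorname{i}\hat\chi'(0)=0$. Thus $|a_k|b_k\to 0$, contradicting $a_jb_j\not\to0$; so $f$ is nowhere differentiable.

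**For Lipschitz continuity**, if $f$ were Lipschitz at $t_0$ with constant $L$ and radius $\eta$, then $|f(t_0-b_k^{-1}z)-f(t_0)|\le L b_k^{-1}|z|$ for $|z|<b_k\eta$, so the displayed integral is bounded by $L b_k^{-1}\int|z\chi(z)|\,dz$ plus a tail over $|z|\ge b_k\eta$ that vanishes as $k\to\infty$ (since $z\chi(z)\in L_1$ and $f$ is bounded). This yields $|a_k|b_k\le L\int|z\chi(z)|\,dz+o(1)$, a bound uniform in $k$, contradicting $\sup_j|a_j|b_j=\infty$. **Finally**, the claims for $\operatorname{Re}f$ and $\operatorname{Im}f$ follow because multiplying $\mathcal{F}(\operatorname{Re}f)$ or $\mathcal{F}(\operatorname{Im}f)$ by $\hat\chi(\cdot/b_k)$ annihilates the $\delta_{-b_j}$ contributions, as noted after \eqref{Fsincos-eq}; so $b_k\chi(b_k\cdot)*\operatorname{Re}f(t_0)=\tfrac12 a_k e^{\operatorname{i} b_k t_0}$ and likewise $\tfrac{1}{2\operatorname{i}}a_k e^{\operatorname{i} b_k t_0}$ for $\operatorname{Im}f$, and the argument proceeds verbatim with an extra harmless factor. **The main obstacle** I anticipate is the Lipschitz part: unlike differentiability, one cannot invoke a single continuous $F$ with a clean limit, so I must split the integral into a near-region governed by the Lipschitz bound and a far-region controlled by integrability of $z\chi(z)$, and verify that the far-region contribution genuinely tends to zero uniformly enough to preserve the contradiction.
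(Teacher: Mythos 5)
Your proof is correct and follows essentially the same route as the paper: the convolution identity \eqref{conv'-eq}, subtraction of $f(t_0)\int\chi\,dz=0$, majorised convergence against the majorant $|z\chi(z)|\sup|F|$, and the replacement of $a_k$ by $a_k/2$ and $a_k/(2\operatorname{i})$ for $\operatorname{Re}f$ and $\operatorname{Im}f$. The only divergence is the Lipschitz step, where your near/far splitting is sound but unnecessary: the paper simply notes that $F(h)=(f(t_0+h)-f(t_0))/h$ is globally bounded (by $\max(L,\,2\sup|f|/\eta)$), so the integral is uniformly bounded in $k$ and $\sup_k|a_k|b_k<\infty$ follows at once.
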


\begin{proof} Continuing from \eqref{conv'-eq}, one clearly has
$a_ke^{\operatorname{i} b_kt_0}=\int_{\mathbb{R}}\chi(z)f(t_0-z/b_k)\,dz$.

If $f$ were differentiable at $t_0$, then
$F(t)=(f(t_0+t)-f(t_0))/t$ would be in $L_\infty $ 
(like $f$), so since $\int \chi(t)\,dt=0$,  
multiplication by $b_k$ and majorised convergence would imply
\begin{equation}
  -a_kb_ke^{\operatorname{i} b_k t_0}
  =\int_{\mathbb{R}} z\chi(z)\frac{f(t_0-z/b_k)-f(t_0)}{-z/b_k}\,dz
 \xrightarrow[k\to\infty ]{~} 
  f'(t_0) \operatorname{i} \frac{d\hat \chi}{d\tau}(0)=0.
  \label{akbk-eq}
\end{equation}
This would entail $|a_k|b_k\to 0$ for
$k\to\infty $, in contradiction of \eqref{ab-cnd}. 

In addition, were $f$ Lipschitz continuous at $t_0$, 
then again $F$ would be bounded, so the integral in 
\eqref{akbk-eq} would be uniformly bounded 
with respect to $k$, 
in which case $\sup_k |a_k|b_k<\infty$.

Finally, using \eqref{Fsincos-eq} ff, 
one can clearly replace $f$ in \eqref{akdelta-eq}--\eqref{conv'-eq} 
by $\operatorname{Re} f$ or $\operatorname{Im} f$ if only $a_k$ is replaced by
$a_k/2$ and $a_k/(2\operatorname{i})$, respectively. Eg
\begin{equation}
  \hat \chi(\tau /b_k)\mathcal{F}\operatorname{Im} f(\tau)=
  2\pi\sum_{j=0}^\infty \hat \chi(\tau/b_k)
     \frac{a_j}{2\operatorname{i}}(\delta_{b_j}(\tau)-\delta_{-b_j}(\tau))
  =2\pi\frac{a_k}{2\operatorname{i}}\delta_{b_k}(\tau).
\end{equation}
Proceeding as for $f$ itself via variants of \eqref{conv'-eq} 
and \eqref{akbk-eq}, it
follows that neither $\operatorname{Re} f$ nor $\operatorname{Im} f$ can be differentiable at some
$t_0\in \mathbb{R}$, respectively Lipschitz continuous if 
$\sup_j |a_j|b_j=\infty $.
\end{proof}

Clearly $\limsup |a_j|b_j>0$ is equivalent to $a_jb_j\not\to 0$; cf
\eqref{ab-cnd}.  
While the former leaves a gap to the non-Lipschitz condition, 
the latter is natural as termwise
differentiation yields $\sum a_jb_j e^{\operatorname{i} b_j t}$, which cannot converge
unless $a_jb_j\to 0$.
The conditions \eqref{ab-cnd} have been used repeatedly in the literature, 
but Theorem~\ref{exp-thm} should be of interest
because of the easy treatment of non-periodic $f$ as well as of
$\operatorname{Re} f$, $\operatorname{Im} f$.

\begin{rem}   \label{Hoelder-rem}
A necessary condition for H{\"o}lder continuity of order $\alpha\in \,]0,1[\,$
follows at once from a modification of the above argument: replacing $a_kb_k$
on the left-hand side of \eqref{akbk-eq} by $a_k b_k^\alpha$, 
the resulting integral will be
uniformly bounded with respect to $k$ since $\int
|z|^\alpha|\chi(z)|\,dz<\infty $. Hence
\begin{equation}
  \sup_{k} |a_k|b_k^ \alpha<\infty 
\end{equation}
whenever $f(t)$ in Theorem~\ref{exp-thm} is H{\"o}lder
continuous of order $\alpha$ at a single point $t_0$. 
\end{rem}

\begin{exmp}
  \label{W-exmp}
Sequences of power type like $a_j=a^{-j}$ and $b_j=b^j$ for parameters $b\ge
|a|>1$ give $f(t)=\sum_{j=0}^{\infty }a^{-j}e^{\operatorname{i} b^jt}$, 
which is covered by Theorem~\ref{exp-thm} as 
$|a_j|b_j=|\tfrac{b}{a}|^j\ge 1$ and $\tfrac{b_{j+1}}{b_j}=
b>1$. Therefore Theorem~\ref{exp-thm} contains Theorem~\ref{W-thm} and
extends it to complex amplitudes.

For $W(t)$ Remark~\ref{Hoelder-rem} yields
$\tfrac{b^\alpha}{a}\le 1$, ie $\alpha\le \tfrac{\log a}{\log b}$. 
In case $b>a>1$, Hardy's proof strategy \cite[p.~311]{Har16} was to show
that $W$ is 
H{\"o}lder continuous
of order $\alpha=\log a/\log b$ but no better (even locally); 
whereas for $b=a>1$
it was obtained that $W(t+h)-W(t)={\mathcal O}(|h|\log1/|h|)$.
So Remark~\ref{Hoelder-rem} at once gives a sharp upper bound for the
H{\"o}lder exponent of $W$. (This was mentioned as a difficult task in
\cite{Jaf97}; however, 
\cite{BaDu92} contains a relatively short proof of the bound.)
Thm.~4.9 in Ch.~II of Zygmund's book
\cite{Zygm} also treats H{\"o}lder continuity of $W$.
\end{exmp}

\begin{exmp}
  \label{Darb-exmp}
In the same way Theorem~\ref{exp-thm} covers Darboux's function  
$f(t)=\sum_{j=0}^\infty \frac{\sin((j+1)!t)}{j!}$,
for $a_j=1/j!$ and $b_j=(j+1)!$ fulfil in particular 
$\tfrac{b_{j+1}}{b_j}=j+2\nearrow \infty $ and $a_jb_j=j+1\nearrow\infty $.
\end{exmp}

\begin{exmp}
  \label{bj-exmp}
Setting  $a_j=a^{-j}$ for some $a>1$
and defining $(b_j)$ by $b_{2m}=a^{2m}$
and $b_{2m+1}=(1+a^{-p})a^{2m}$, it is seen directly that
when the power $p$ is so large that $1+a^{-p}<a^2$, 
then the sequences $(a_j)$ and
$(b_j)$ fulfil the conditions of Theorem~\ref{exp-thm}. Eg
\eqref{ab-cnd} holds as
$\tfrac{b_{j+1}}{b_j}\in \{1+a^{-p},a^2(1+a^{-p})^{-1}\}\subset \,]1,\infty
[\,$
and  $a_jb_j\in \{1,(1+a^{-p})/a\}$. Thus
$f(t)$ is nowhere differentiable in this case. If further $p$ is so large
that $1+a^{2}<a^p(a^{2}-1)$ it is easily verified that $b_{2m+1}-b_{2m}<
b_{2m}-b_{2m-1}$ so that $(b_{j+1}-b_j)$ is not monotone increasing.
Eg if $a=5$,  both requirements are met by $p=1$ and the values of $(b_j)$
are 
$$1, \tfrac{6}{5}, 25, 30, 625, 750, 15\,625, 18\,750,\dots $$ 
Clearly these frequencies
have a distribution with lacunas of rather uneven size.
\end{exmp}

To elucidate the assumptions in Theorem~\ref{exp-thm}, note that
for a sequence $(b_j)$ of positive reals,
\begin{equation}
  \liminf \frac{b_{j+1}}{b_j}> 1
\iff   \liminf \frac{b_{j+1}-b_j}{b_j}>0
\iff  \exists J,\varepsilon>0\ 
\forall j>J\colon
\varepsilon b_j<b_{j+1}-b_j < b_{j+1}.
  \label{bDbb-eq}
\end{equation}
Hence the conditions $\liminf \tfrac{b_{j+1}}{b_j}>1$ 
and $b_j\nearrow \infty $ imply that
$b_{j+1}-b_j\to\infty $ 
when Theorem~\ref{exp-thm} applies; but the gaps $b_{j+1}-b_j$ need not be
monotone increasing; cf Example~\ref{bj-exmp}.

Moreover, $\liminf \tfrac{b_{j+1}}{b_j}>1$ implies exponential
growth of the $b_j$ (as $b_j\ge \lambda^{j-j_0}b_{j_0}$ when
$\tfrac{b_{j+1}}{b_j}\ge \lambda>1$ for $j\ge j_0$)
so Thm.~\ref{exp-thm} does not apply if $b_j=j^q$.
This will be remedied in Thm.~\ref{diff-thm} ff.

\section{Dilation by differences}  \label{diff-sect}
To escape the exponential frequency growth in Theorem~\ref{exp-thm}, it
is natural instead of dilation by $b_j$ 
to use the smallest gap at frequency $b_j$, ie to dilate by
\begin{equation}
  \Delta b_j=\min(b_j-b_{j-1},b_{j+1}-b_j). \qquad (b_{-1}=0)
  \label{Dbj-eq}
\end{equation}
This requires $\lim\Delta b_j=\infty $,  
that one could use as an assumption
(replacing exponential growth by one of its consequences,
cf \eqref{bDbb-eq}).
However, 
\eqref{aDb-eq} below is weaker, since it only implies the existence of
$j_1<j_2<\dots $  satisfying
$\lim \Delta b_{j_k}=\infty $
(the $\Delta b_j$ are unbounded since $a_j\to0 $).

\begin{thm}
  \label{diff-thm}
Let 
$f(t)=\sum_{j=0}^\infty  a_j\exp({\operatorname{i} b_jt})$
for a complex sequence $(a_j)_{j\in \mathbb{N}_0}$ with
$\sum_{j=0}^\infty  |a_j|<\infty $
and $0<b_j\nearrow \infty $.
When $\Delta b_j$ in \eqref{Dbj-eq}
fulfils 
\begin{equation}
  a_j\Delta b_j \not\to 0 \quad\text{for $j\to\infty $},
  \label{aDb-eq}
\end{equation}
then $f$ is bounded and continuous on $\mathbb{R}$, but nowhere
differentiable.  If $\sup_j |a_j|\Delta b_j=\infty $ holds in addition, 
then $f$ is 
not Lipschitz continuous at any $t_0\in \mathbb{R}$. The
conclusions are also valid for $\operatorname{Re} f$ and $\operatorname{Im} f$. 
\end{thm}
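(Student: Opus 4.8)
The plan is to run the second microlocalisation of Section~\ref{micro-sect} again, but with the pure dilation $b_k\chi(b_k\cdot)$ — which isolates $b_k$ only under exponential growth — replaced by a \emph{modulated} dilation whose frequency window has width $\Delta b_k$ instead of width $\propto b_k$. Concretely, I would fix one $\chi\in\mathcal{S}(\mathbb{R})$ with $\hat\chi\in C^\infty$, $\hat\chi(0)=1$ and $\hat\chi(\tau)=0$ for $\tau\notin\,]{-1},1[\,$ (e.g.\ $\hat\chi(\tau)=\exp(1-(1-\tau^2)^{-1})$ on $\,]{-1},1[\,$), and set $g_k(t)=e^{\operatorname{i} b_k t}\,\Delta b_k\,\chi(\Delta b_k t)$, so that $\hat g_k(\tau)=\hat\chi((\tau-b_k)/\Delta b_k)$ is a bump centred at $b_k$ of half-width $\Delta b_k$. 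By the very definition \eqref{Dbj-eq} one has $b_{k-1}\le b_k-\Delta b_k$ and $b_k+\Delta b_k\le b_{k+1}$, so $\hat g_k$ vanishes at every $b_j$ with $j\ne k$ and equals $1$ at $b_k$; this is the analogue of \eqref{xd-eq}, whence $\hat g_k\cdot\mathcal{F}f=2\pi a_k\delta_{b_k}$ as in \eqref{akdelta-eq} and thus the basic identity $g_k*f(t_0)=a_k e^{\operatorname{i} b_k t_0}$ as in \eqref{conv'-eq}. Since the shifted support lies in $\,]0,\infty[\,$ once $\Delta b_k<b_k$, the mirror masses $\delta_{-b_j}$ drop out automatically, which will dispose of $\operatorname{Re}f$ and $\operatorname{Im}f$ at the same time.

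Writing out the convolution and substituting $z=\Delta b_k t$ turns this into
\begin{equation*}
  a_k e^{\operatorname{i} b_k t_0}=\int_{\mathbb{R}}e^{\operatorname{i} b_k z/\Delta b_k}\chi(z)\,f(t_0-z/\Delta b_k)\,dz .
\end{equation*}
The new feature compared with \eqref{fint-eq}--\eqref{akbk-eq}, and the step I expect to be the main obstacle, is the oscillatory factor $e^{\operatorname{i} b_k z/\Delta b_k}$ produced by the modulation: it has modulus $1$ but does not converge as $k\to\infty$, so it cannot simply be discarded. The resolution I would use is that this oscillation is precisely what makes the moments of $g_k$ vanish: because $\hat\chi$ is supported in $\,]{-1},1[\,$ while $b_k/\Delta b_k\ge1$, the bump $\hat g_k$ is identically $0$ near $\tau=0$, so $\int_{\mathbb{R}}t^m g_k\,dt=\operatorname{i}^m(\hat g_k)^{(m)}(0)=0$ for every $m\in\mathbb{N}_0$. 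In particular $\int g_k\,dt=0$ lets me subtract $f(t_0)$, and $\int t\,g_k\,dt=0$ will annihilate the first-order term below.

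For the nowhere-differentiability I would suppose $f'(t_0)$ exists, put $F(h)=(f(t_0+h)-f(t_0))/h$ with $F\in L_\infty$ and $F(h)\to f'(t_0)$ as $h\to0$, and multiply by $\Delta b_k$ to reach
\begin{equation*}
  a_k\Delta b_k\,e^{\operatorname{i} b_k t_0}=-\int_{\mathbb{R}}e^{\operatorname{i} b_k z/\Delta b_k}\,z\chi(z)\,F(-z/\Delta b_k)\,dz .
\end{equation*}
Splitting $F(-z/\Delta b_k)=f'(t_0)+R_k(z)$, the constant part equals $-f'(t_0)\operatorname{i}\hat\chi'(-b_k/\Delta b_k)=0$ by the vanishing of the first moment, while $R_k\to0$ pointwise under the majorant $(\sup|F|+|f'(t_0)|)\,|z\chi(z)|\in L_1$. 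The argument must be run along a subsequence: the hypothesis $a_j\Delta b_j\not\to0$ yields $\varepsilon>0$ and $j_1<j_2<\cdots$ with $|a_{j_k}\Delta b_{j_k}|\ge\varepsilon$, and since $\sum|a_j|<\infty$ forces $a_j\to0$ this already entails $\Delta b_{j_k}\to\infty$ (as noted before the theorem). Along that subsequence majorised convergence gives $a_{j_k}\Delta b_{j_k}\to0$, contradicting $|a_{j_k}\Delta b_{j_k}|\ge\varepsilon$.

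For the non-Lipschitz claim I would only use that Lipschitz continuity at $t_0$ makes $F$ bounded on all of $\mathbb{R}$, so the last integral is bounded uniformly in $k$ by $\sup|F|\int|z\chi(z)|\,dz$; thus $\sup_k|a_k|\Delta b_k<\infty$, against $\sup_j|a_j|\Delta b_j=\infty$. Continuity and boundedness of $f$ follow from $\sum|a_j|<\infty$ as in Proposition~\ref{ftheta-prop}, and the statements for $\operatorname{Re}f$ and $\operatorname{Im}f$ come out of the same computation after replacing $a_k$ by $a_k/2$ and $a_k/(2\operatorname{i})$, the modulated bump having already removed the masses $\delta_{-b_j}$.
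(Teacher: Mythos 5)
Your proposal is correct and is essentially the paper's own proof: the same modulated dilation $e^{\operatorname{i} b_k t}\,\Delta b_k\,\chi(\Delta b_k t)$ (the paper's $\psi_k$, with window half-width $\tfrac12\Delta b_k$ instead of your $\Delta b_k$, an immaterial difference), the same vanishing of $\hat g_k$ and its first derivative at $0$ because $b_k/\Delta b_k\ge1$, and the same majorised-convergence endgame for both the differentiability and Lipschitz claims. Your explicit passage to a subsequence with $|a_{j_k}|\Delta b_{j_k}\ge\varepsilon$ and hence $\Delta b_{j_k}\to\infty$ is exactly the point the paper records in the paragraph preceding the theorem, so nothing is missing.
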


\begin{proof}
That $f\in  C(\mathbb{R})\cap  L_\infty (\mathbb{R})$ is shown as in Theorem~\ref{exp-thm}.
Let now $\mathcal{F}\psi\in  C^\infty (\mathbb{R})$ fulfil $\mathcal{F}\psi(0)=1$ and
$\mathcal{F} \psi(\tau)\ne0$ only for $|\tau|<1/2$, 
and take the spectral cut-off function as
\begin{equation}
  \hat\psi_k(\tau)= \hat \psi(\frac{\tau-b_k}{\Delta b_k}).
\end{equation}
Then the definition of $\Delta b_k$ as a minimum entails
\begin{equation}
  \hat \psi_k(\tau)\ne0\implies
  b_k-\tfrac{1}{2}(b_k-b_{k-1})<\tau<b_k+\tfrac{1}{2}(b_{k+1}-b_k).
  \label{psik0-eq}
\end{equation}
Since $(b_j)$ is increasing, the $\tau$-interval specified here 
only contains $b_j$ for $j=k$, whence
\begin{equation}
  \hat \psi_k(\tau)\hat f(\tau)=
 2\pi\sum_{j=0}^\infty  a_j\hat \psi_k(\tau)\delta_{b_j}(\tau)
  =2\pi a_k\delta_{b_k}(\tau).
  \label{psikf-eq}
\end{equation}
Note that by a change of variables,
\begin{equation}
  \psi_k(t)=
  \mathcal{F}^{-1}\hat \psi_k(t)=\tfrac{1}{2\pi} \int_{\mathbb{R}}
  e^{\operatorname{i} t(b_k+\sigma\Delta b_k)}\hat \psi(\sigma)\Delta b_k\,d\sigma
  = (\Delta b_k)e^{\operatorname{i} tb_k}\psi(t\Delta b_k).
\end{equation}
Here the integral of the left-hand side is $0$ by \eqref{psik0-eq}, 
so application of $\mathcal{F}^{-1}$ to \eqref{psikf-eq} gives,
\begin{equation}
  \begin{split}
    a_k(\Delta b_k)e^{\operatorname{i} b_k t_0}&= (\Delta b_k) f* \psi_k(t_0)
\\
  &=\int_{\mathbb{R}}(f(t_0-t)-f(t_0))(\Delta b_k)^2 e^{\operatorname{i} b_kt}\psi(t\Delta b_k)
\, dt.
\\
  &=\int_{\mathbb{R}}\frac{f(t_0-z/\Delta b_k)-f(t_0)}{z/\Delta b_k}
     z\psi(z) e^{\operatorname{i} z{b_k}/{\Delta b_k}}\, dz.
  \end{split}
  \label{diff-id}
\end{equation}
If $f$ is Lipschitz continuous at $t_0$, $h\mapsto
(f(t_0+h)-f(t_0))/h$ is bounded, so for some $L\in \mathbb{R}$
\begin{equation}
 \sup_k |a_k|\Delta b_k 
  \le  \sup_k\int_{\mathbb{R}} \left|
  \frac{f(t_0-z/\Delta b_k)-f(t_0)}{z/\Delta b_k} \right||z\psi(z)|\,dz
\le  L\int_{\mathbb{R}} |z\psi(z)|\,dz<\infty .
\end{equation}
Moreover, because
$b_k/\Delta b_k\ge  b_k/(b_k-b_{k-1})>1$, 
\begin{equation}
  \int_{\mathbb{R}} z\psi(z) 
  e^{\operatorname{i} z b_k/\Delta b_k}\,dz
  =    \operatorname{i} \frac{d\hat \psi}{d\tau}(-{b_k}/{\Delta b_k})=0.
\end{equation}
So were $f$ differentiable at $t_0$, it would follow from 
\eqref{diff-id} by majorised convergence that 
\begin{equation}
  a_k(\Delta b_k) e^{\operatorname{i} t_0 b_k}
  = -\int_{\mathbb{R}}
   \Big(\frac{f(t_0-z/\Delta b_k)-f(t_0)}{-z/\Delta b_k}-f'(t_0)\Big)
  z\psi(z) e^{\operatorname{i} z{b_k}/{\Delta b_k}}\,dz 
  \xrightarrow[k\to\infty ]{~}0,
\end{equation}
in contradiction of \eqref{aDb-eq}.
Finally the same arguments apply to $\operatorname{Re} f$, $\operatorname{Im} f$ by dividing  $a_k$ by
$2$ and $2\operatorname{i}$, respectively, as in Theorem~\ref{exp-thm}.
\end{proof}

\begin{rem}
  \label{Hoelder'-rem}
 If $f$ in
Theorem~\ref{diff-thm} is H{\"o}lder continuous of order $\alpha\in
\,]0,1[\,$ at some $t_0$, \eqref{diff-id} yields
\begin{equation}
  \sup_{j} |a_j|(\Delta b_j)^\alpha<\infty .
  \label{ggc-eq}
\end{equation}
When applied to $W$, this gives the same result as
Remark~\ref{Hoelder-rem}, for $\Delta b_j=cb^j$ with $c=1-1/b>0$ as $b>1$.
Hence the gap growth condition \eqref{ggc-eq} cannot be sharpened in
general.
\end{rem}
Note that, due to the use of the extended $\mathcal{F}$ on $\mathcal{S}'(\mathbb{R})$, 
it is clear from 
\eqref{psikf-eq} that one cannot dilate by larger quantities than $\Delta
b_k$, so the method seems optimally exploited.

Apparently, nowhere-differentiability has not been obtained under
the weak assumptions of Theorem~\ref{diff-thm} before.
Like for $f_\theta$ and
$W$, the regularity of the sum
function improves when the growth of the frequencies is taken smaller,
eg by reducing $q$ in the following:

\begin{exmp}[Polynomial growth]  \label{pwr-exmp}
For $p>1$ one has uniformly continuous functions 
\begin{equation}
  f_{p,q}(t)=\sum_{j=1}^\infty \frac{\exp(\operatorname{i} t j^q)}{j^p},
   \qquad \operatorname{Re} f_{p,q}(t),\qquad \operatorname{Im} f_{p,q}(t),
  \label{fpq-eq}
\end{equation}
that moreover are $C^1$ 
and bounded with bounded derivatives on $\mathbb{R}$ in case $0<q<p-1$.
However, for $q\ge  p+1$ they are 
nowhere differentiable according to Theorem~\ref{diff-thm}: 
\eqref{aDb-eq} follows since 
by the mean value theorem the frequency gaps increase, and
\begin{equation}
  \limsup j^{-p}(j^q-(j-1)^q)\ge \limsup qj^{q-p-1}(1-1/j)^{q-1}
=\begin{cases}
  q &\text{ for $q=p+1$,}
\\
  \infty &\text{ for $q>p+1$}.
\end{cases}
\end{equation}
Moreover, for $q>p+1$ there is not Lipschitz continuity  at any point.

But the functions in \eqref{fpq-eq} are globally 
H{\"o}lder continuous of order $\alpha=(p-1)/q$
if only $q> p-1$. This results from integral comparisons that 
(eg for $c=1+1/(q-p+1)$) yield
\begin{equation}
  |f_{p,q}(t+h)-f_{p,q}(t)|\le \sum_{j\le  N} j^{q-p}|h| +\sum_{j>N} 2j^{-p}
  \le c N^{q-p+1}|h| +\frac{2}{p-1}N^{1-p}.
  \label{Nsplit-eq}
\end{equation}
For $0<|h|\le \tfrac{1}{2}$ this is exploited for the unique $N$ such that
$N\le |h|^{-1/q}<N+1$.
For the H{\"o}lder exponents, 
this is optimal among the powers $|h|^{-\theta}$, 
for clearly $\theta=1/q$ maximises
\begin{equation}
  \min(\theta(p-1),1-\theta(q-p+1)).
\end{equation}
Insertion of the choice of $N$ in \eqref{Nsplit-eq}
gives a $C<\infty $ so that for $h\in \mathbb{R}$ (as $f_{p,q}\in L_\infty$)
\begin{equation}
  |f_{p,q}(t+h)-f_{p,q}(t)|\le C|h|^{\alpha}, \qquad \alpha=\frac{p-1}{q}.
  \label{fpqa-eq}
\end{equation}

Since $\Delta b_j<qj^{q-1}$ for $q>1$, 
the necessary condition in Remark~\ref{Hoelder'-rem} is fulfilled for
$\alpha(q-1)-p\le 0$, leading to the upper bound
$\alpha\le \frac{p}{q-1}$. So in view of \eqref{fpqa-eq} 
there remains a gap for these functions.
\end{exmp}

In view of Example~\ref{pwr-exmp}, it is clear that 
Theorem~\ref{diff-thm} improves
Theorem~\ref{exp-thm} a good deal. The condition $|a_j|\Delta
b_j\not\to0$ in \eqref{aDb-eq}
cannot be relaxed in general, for already for $W$ it amounts
to $b\ge a$, that is equivalent to nowhere-differentiability.

However, \eqref{aDb-eq} does not give optimal results for $f_{p,q}$.
Eg the case with $p=q=2$ has been completely clarified and 
shown to have a delicate nature, as it is known from several investigations
that the so-called Riemann function  
\begin{equation}
 R(t)=\sum_{j=1}^{\infty }\frac{\sin(\pi j^2t)}{j^2}  
\end{equation}
is differentiable with $R'(t)=-1/2$
exactly at $t=r/s$ for odd integers $r$, $s$. 
For properties of this function the reader is referred to the paper of 
J.~Duistermaat \cite{Dui91}.

As $f_{p,q}$ is in $C^{1}(\mathbb{R})$ for every
$q<p-1$ when $p>1$, transition to
nowhere-differentiability occurs (perhaps gradually) as $q$ runs through
the interval $[p-1,p+1[\,$. 
Nowhere-differentiability for $q\ge p+1 $ was also mentioned for $\operatorname{Im} f_{p,q}$ 
by W.~Luther \cite{Lut86}
as an outcome of a very general Tauberian theorem.
(In addition $\operatorname{Im} f_{p,2}$ was
covered with nowhere-differentiability for $p\le 3/2$ providing cases in
$[p+\tfrac{1}{2},p+1[\,$; for $t$ irrational
Luther's result relied on Hardy's investigation \cite{Har16}, that covered 
$\operatorname{Im} f_{p,2}$ for $p<5/2$ thus giving cases of almost nowhere differentiability
in $\,]p-\tfrac{1}{2},p+\tfrac{1}{2}]$ for $q=2$.)

By \eqref{fpqa-eq}, $R$ is
globally H{\"o}lder continuous of order $\alpha=1/2$, that is well known 
\cite{Dui91}. At the differentiability points, this is of course not
optimal, but the local H{\"o}lder regularity of $R$ is known to attain every
value 
$\alpha\in [\tfrac{1}{2},\tfrac{3}{4}]$ in a non-empty set; cf the paper of
S.~Jaffard \cite{Jaf97}. 

\begin{rem}   \label{ChUb-rem}
Recently $f_{p,q}$ was studied by
F.~Chamizo and A.~Ubis \cite{ChUb07} for $q\in \mathbb{N}$, $p>1$,
with nowhere-differentiability 
treated by convolving $f_{p,q}$ with the Fej\'er kernel,
cf \cite[Prop.~3.3]{ChUb07}. 
This method was proposed as
an alternative to those of \cite{Lut86}
and similar in spirit to the above proof of 
Theorem~\ref{diff-thm}.
But some statements are flawed: eg in \cite[Thm.~3.1]{ChUb07}, 
$f_{p,q}$ is claimed differentiable at an irreducible fraction
$t=r/s\in \mathbb{Q}$, $s>0$, if and only if both $q<p+1/2$ and,
for some maximal prime power $\sigma^\gamma$ in the factorisation of $s$, 
$q$ divides $\gamma-1$ but is relatively prime with $\sigma-1$.
However,
$f_{p,q}\in C^1(\mathbb{R})$ for every $q<p-1$ 
(cf Example~\ref{pwr-exmp} above), 
whilst for $q\in \mathbb{N}\cap [2,p-1[\,$ their condition 
is violated at $\tfrac{r}{s}=\tfrac{1}{2^q}$;
hence this claim is not correct for such $q$.
\end{rem}


\begin{figure}[phtb]
\epsfxsize=9cm
\epsfbox{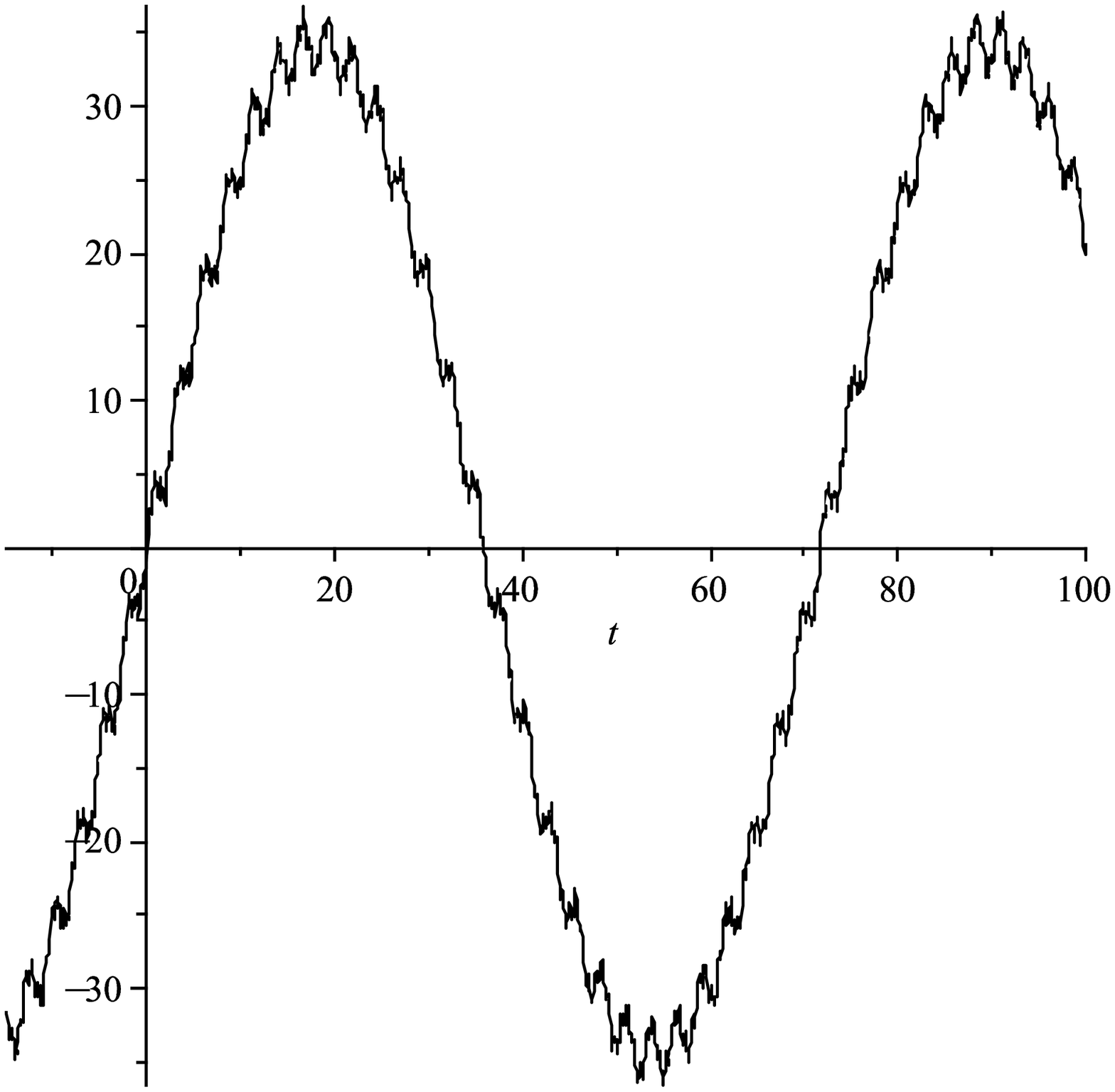}
\caption{Graph of $\operatorname{Im} F_2(t)$ for the function in \eqref{F2-eq}}
  \label{logg-fig}
\end{figure}

\section{Slowly growing frequencies}   \label{slow-sect}
Using Theorem~\ref{diff-thm},
nowhere-differentiability is obtained in new cases where 
$b_j$ is almost $\mathcal O(j^2)$.
The examples  to follow all relate to the limiting case $p=1$, $q=2$ in 
Example~\ref{pwr-exmp}.

Setting $\log^a t=(\log t)^a$ for $a\in \mathbb{R}$ and $t>1$, the functions
\begin{equation}
  F_1(t)=\sum_{j=2}^\infty \frac{\exp(\operatorname{i} tj^2\log^bj)}{j\log^aj},
  \qquad \operatorname{Re} F_1(t), \qquad \operatorname{Im} F_1(t)
\end{equation}
are for $b\ge a>1$ continuous, bounded and nowhere differentiable on $\mathbb{R}$
by  Theorem~\ref{diff-thm}, for the mean-value theorem gives
$a_j\Delta b_j\ge 2\log^{b-a}j$.
For $b>a>1$ there is no Lipschitz continuity.
 
To simplify, the Lipschitz aspect is left out below by taking
$b=a$. Instead iterated logarithms will be seen to allow
quasi-quadratic growth of the $b_j$,
relying on a general result for  $b_j=j/|a_j|$:

\begin{cor}   \label{conv-corr}
If $\sum_{j>J} |a_j|<\infty $ and $|a_j|\ge |a_{j+1}|>0$ 
for all $j>J$
while for a convex function
$\varphi\colon \,]J,\infty [\,\to\mathbb{R}$  one has $\varphi(j)=j/|a_j|$ for 
$j\in \mathbb{N}\cap\,]J,\infty [\,$, then 
\begin{equation}
  f(t)=\sum_{j>J} a_j\exp(\operatorname{i} tj/|a_j|),\qquad \operatorname{Re} f(t), \qquad \operatorname{Im} f(t)
\end{equation}
are continuous on $\mathbb{R}$ but nowhere differentiable.
\end{cor}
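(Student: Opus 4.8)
The plan is to read this off from Theorem~\ref{diff-thm} by taking the frequencies to be $b_j=j/|a_j|$; the entire task then reduces to verifying the standing hypothesis $0<b_j\nearrow\infty$ and the gap condition \eqref{aDb-eq}. To begin, $f,\operatorname{Re}f,\operatorname{Im}f\in C(\mathbb{R})\cap L_\infty(\mathbb{R})$ holds at once because $\sum_{j>J}|a_j|<\infty$, exactly as in the proofs of Theorems~\ref{exp-thm} and~\ref{diff-thm}, so only nowhere-differentiability requires work. For the standing hypothesis, positivity of $b_j=j\,|a_j|^{-1}$ is immediate from $|a_j|>0$, and since $|a_j|$ is non-increasing while $j\nearrow\infty$ one has $b_{j+1}=(j+1)|a_{j+1}|^{-1}\ge(j+1)|a_j|^{-1}>j\,|a_j|^{-1}=b_j$, so $b_j$ is strictly increasing; moreover $b_j\ge j/|a_{J+1}|\to\infty$.

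The heart of the matter is to identify $\Delta b_j$ from \eqref{Dbj-eq} and to estimate $|a_j|\Delta b_j$. This is where convexity of $\varphi$ is used: as $\varphi(j)=b_j$ and $\varphi$ is convex, its successive forward differences $b_{j+1}-b_j=\varphi(j+1)-\varphi(j)$ are non-decreasing in $j$ (the second differences of a convex function at equidistant points are $\ge0$). Hence the minimum in \eqref{Dbj-eq} is realised by the backward gap, $\Delta b_j=b_j-b_{j-1}$, and a direct computation gives
\[
  |a_j|\,\Delta b_j
  =|a_j|\Big(\frac{j}{|a_j|}-\frac{j-1}{|a_{j-1}|}\Big)
  =j-(j-1)\frac{|a_j|}{|a_{j-1}|}\ge 1,
\]
where the inequality uses $|a_j|\le|a_{j-1}|$. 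Thus $\liminf_j|a_j|\Delta b_j\ge1$, so in particular $a_j\Delta b_j\not\to0$, which is precisely \eqref{aDb-eq}; Theorem~\ref{diff-thm} then delivers nowhere-differentiability of $f$, and equally of $\operatorname{Re}f$ and $\operatorname{Im}f$.

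I expect the only subtle point to be the step from convexity of $\varphi$ to the monotonicity of the gaps and thereby to $\Delta b_j=b_j-b_{j-1}$; the rest is bookkeeping, and since \eqref{aDb-eq} is an asymptotic condition the behaviour of the first few indices --- including how $b_{J}$ is to be read in the backward gap at $j=J+1$ --- is immaterial. It is worth remarking that the same computation applied to the forward gap yields $|a_j|(b_{j+1}-b_j)=(j+1)|a_j|/|a_{j+1}|-j\ge1$ as well, so in fact $|a_j|\Delta b_j\ge1$ without any appeal to convexity; the convex-interpolation hypothesis thus serves mainly to pin down which gap attains the minimum, and it is the form most readily checked in the slowly growing examples that follow.
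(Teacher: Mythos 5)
Your proof is correct and follows essentially the same route as the paper's: convexity of $\varphi$ forces $\Delta b_j=\varphi(j)-\varphi(j-1)$, and monotonicity of $|a_j|$ then gives $|a_j|\Delta b_j\ge j-(j-1)=1$, so Theorem~\ref{diff-thm} applies. Your closing observation that the forward gap obeys the analogous bound $(j+1)|a_j|/|a_{j+1}|-j\ge 1$, so that $|a_j|\Delta b_j\ge 1$ already follows from the monotonicity of $|a_j|$ alone, is a valid small sharpening not noted in the paper.
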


\begin{proof}
As $\varphi$ is convex, clearly
$\Delta b_j=\varphi(j)-\varphi(j-1)$.
Therefore
$|a_j|\Delta b_j =|a_j|(\tfrac{j}{|a_j|}-\tfrac{j-1}{|a_{j-1}|})
\ge  j-(j-1)=1$, 
and $b_j=j/|a_j|\nearrow\infty $, whence
Theorem~\ref{diff-thm} yields the claim.
\end{proof}

For $t>e$, there is a
nowhere differentiable function given by
\begin{equation}
  F_2(t)=\sum_{j=3}^\infty 
       \frac{\exp(\operatorname{i} t j^2\log j(\log\log j)^a)}{j\log j(\log\log j)^a},
  \qquad a>1.
  \label{F2-eq}
\end{equation}
This can be seen directly from Corollary~\ref{conv-corr}, but it 
is a special case of Example~\ref{logn-exmp} below.

The graph of 
$\operatorname{Im} F_2$ is sketched in
Figure~\ref{logg-fig}. All figures give a plot of a	
partial sum with 1000 terms and partition points.
The quasi-periodic behaviour visible in Figure~\ref{logg-fig}
results because the first term of the series is dominating.
More pronounced cases of slow growth are given in:

\begin{exmp}   \label{logn-exmp}
Denoting the $n$-fold logarithm by $\Log{n} t:= \log\dots \log t$, defined for
$t>E_{n-2}:=\exp\dots \exp 1$ ($n-2$ times), and setting 
$\Log{n}^at=(\Log{n} t)^a$ for $a\in \mathbb{R}$ and $t>E_{n-1}$, there is a continuous
nowhere differentiable function given for $t>E_{n-1}$ by
\begin{equation}
  F_n(t)=\sum_{j>E_{n-1}} 
       \frac{\exp(\operatorname{i} t j^2\log j\dots \Log{(n-1)}j \cdot \Log{n}^aj)}
       {j\log j\dots \Log{(n-1)}j \cdot \Log{n}^aj},\qquad a>1.
  \label{flogn-eq}
\end{equation}
Indeed, $\sum |a_j|<\infty $ because $a_j=1/(j\log j\dots\Log{(n-1)}j
\Log{n}^aj)$ equals $g'(j)$, whereby
\begin{equation}
  g(t)=\tfrac{1}{1-a}\Log{n}^{1-a}t=\tfrac{1}{1-a}(\log\dots \log t)^{1-a}
  \xrightarrow[t\to\infty ]{~} 0 
  \quad\text{for}\quad a>1.
\end{equation}
That $a_j\ge a_{j+1}$ follows since all iterated
logarithms are monotone increasing and positive for $j>E_{n-1}$.
Analogously,
$\varphi_{a,n}(t)=t^2\log t\dots \Log{(n-1)}t\cdot \Log{n}^{a}t$
is convex on $\,]E_{n-1},\infty [\,$, 
for $\varphi'_{a,n}(t)$ is easily written as 
a sum of $n+1$ terms, that are increasing.
Hence Corollary~\ref{conv-corr} gives the claim.

$\operatorname{Im} F_3$ and its first term are sketched in Figure~\ref{logn3-fig}
for $a=2$. One has $E_{2}=e^{e}\approx 15.15$, and for 
$j\ge 16$ the frequencies are 
0.28, 1.4, 3.5, 6.8, 11, 17, 25, 34, 45, 57, \dots
(As $a=2$, the sum could include $e< j<e^e$, but
the $b_j$ decrease from $11$
for $j=6$ to $0.009$ for $j=15$.)

As a last comparison, for $a=2$ and $n=4$, summation begins 
in \eqref{flogn-eq} after
$E_3=e^{e^e}=3\,814\,279.1\dots $ Cf Figure~\ref{logn4-fig}.
The quasi-quadratic growth of $b_j$ is indicated by the fact that terms
no.~1, 10, 100, 1000 have frequencies 0.02, 2.4, 247, 24\,326, respectively.
\end{exmp}

\begin{figure}[phtb]
\epsfxsize=9cm
\epsfbox{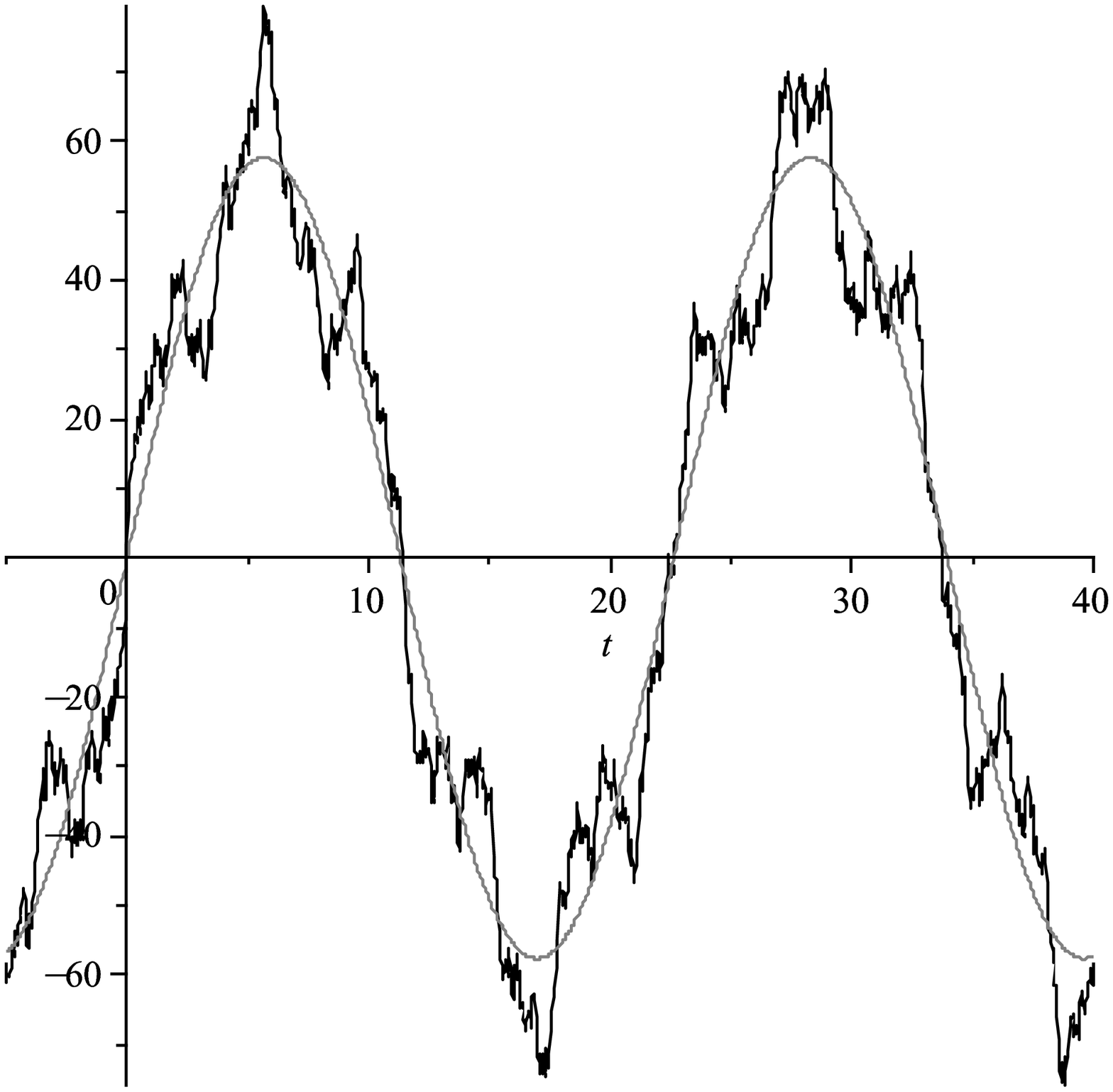}
\caption{$\operatorname{Im} F_3(t)$ 
for the function in Example~\ref{logn-exmp}, $n=3$, $a=2$}
  \label{logn3-fig}
\end{figure}
\begin{figure}[htbp]
\epsfxsize=10cm
\epsfbox{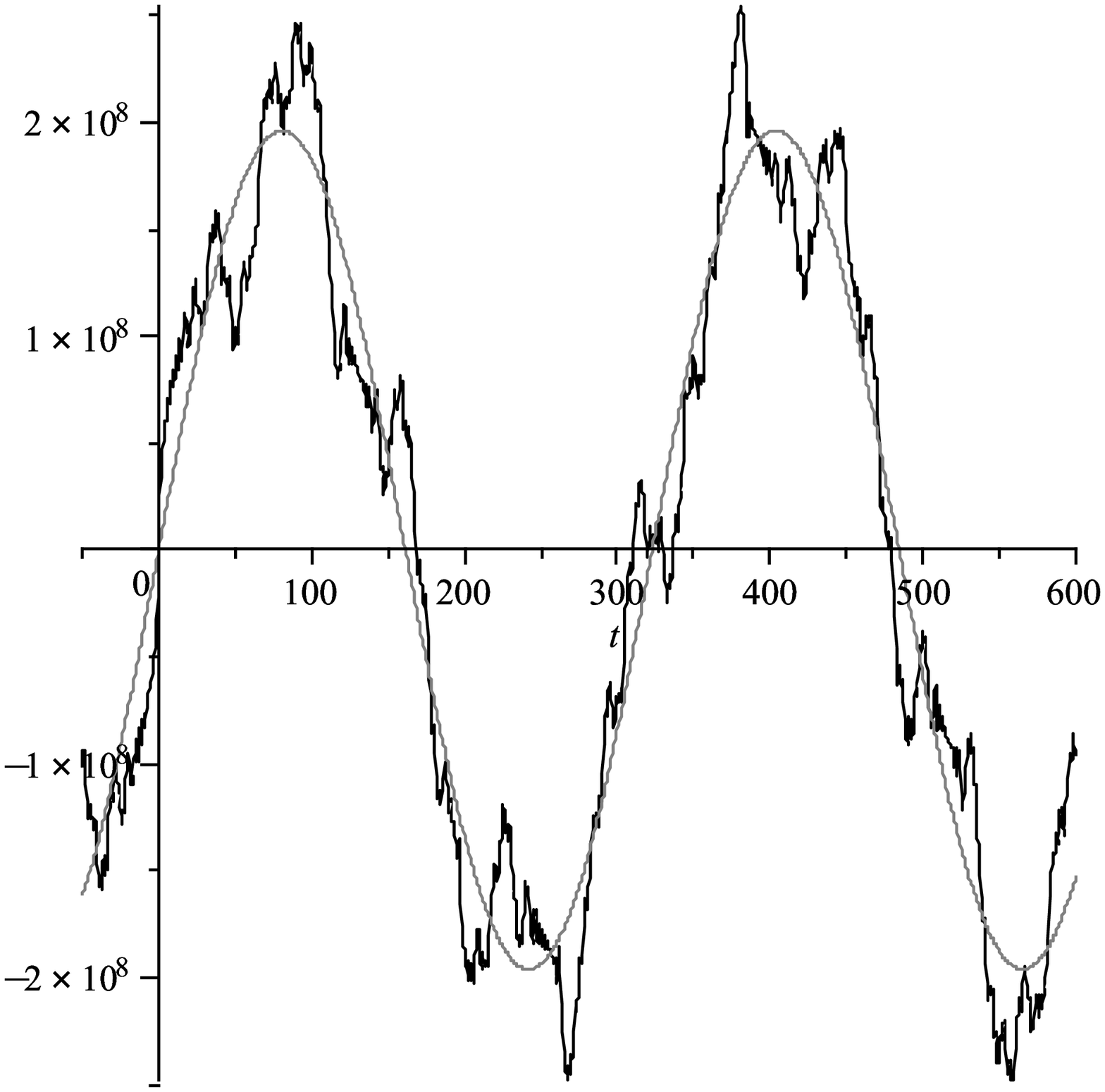}
\caption{$\operatorname{Im} F_4(t)$ 
for the function in Example~\ref{logn-exmp}, $n=4$, $a=2$}
  \label{logn4-fig}
\end{figure}

It may of course be shown analytically that, 
despite the larger number of $j$-dependent factors, one gets slower 
frequency growth in $F_{n+p}$ than in $F_n$.
Figures~\ref{logg-fig}, \ref{logn3-fig} and \ref{logn4-fig}
indicate that as the frequency growth is reduced, there will be 
increasingly larger deviations from a sinusoidal curve. 

Figure~\ref{logn4'-fig} shows the deviation from the
first term, ie the sum over $j\ge 3814281$. Notice that here the sinusoidal
structure is almost completely lost, ie the first term is 
even less dominating.

In addition to the vertical tangent at the
origin in Figure~\ref{logn4'-fig}, there seems to be approximate
self-similarities, like those for $R$ analysed by J.~Duistermaat
\cite{Dui91}. Eg the behaviour for ca.\ $40<t<75$ seems similar to that
found for $25<t<40$ and so on for $t\to0_+$.

\begin{figure}[htbp]
\epsfxsize=10cm
\epsfbox{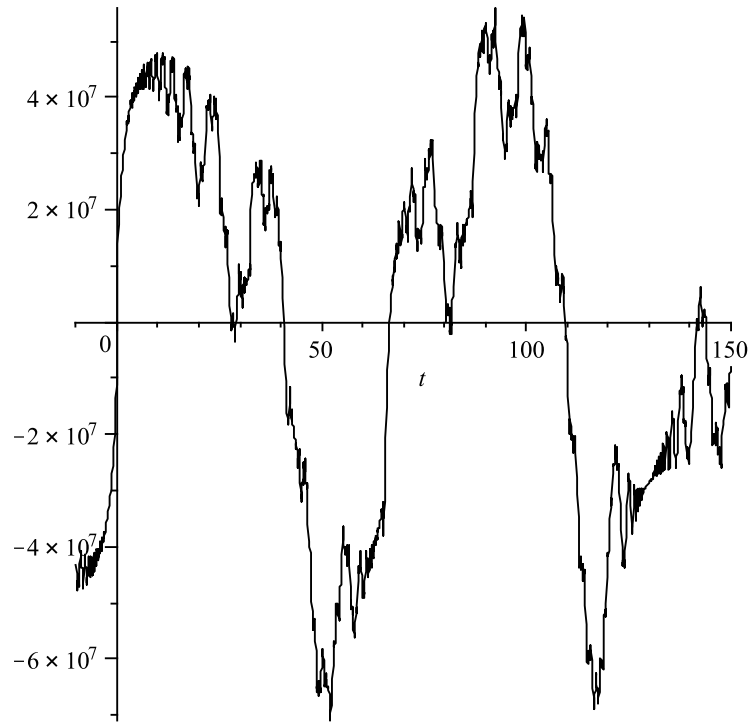}
\caption{Deviation from the first term of $\operatorname{Im} F_4(t)$ 
in Example~\ref{logn-exmp}, $n=4$, $a=2$}
  \label{logn4'-fig}
\end{figure}

\section{Final remarks}
The first example of a nowhere differentiable function is due to 
B.~Bolzano (ca.~1830, discovered 1921), 
cf the accounts in \cite{Hyk01, Thi03}. 
Nowhere-differentiability was established by means of infinite products
in \cite{Wen02}.
For a review of the historical development of the subject 
the reader could consult the illustrated
thesis of J.~Thim \cite{Thi03}.
Very recently nowhere-differentiable functions were shown to enter the
counter-examples that establish the pathological properties of
pseudo-differential operators of type~$1,1$; cf \cite{JJ08vfm}.

It has turned out that
some elements of the arguments exist sporadically in the literature; cf
Remark~\ref{ChUb-rem} for comments on \cite{ChUb07}. 
In particular $\operatorname{Re} f_{\theta}$ and $\operatorname{Im} f_{\theta}$, $\theta=1$ have
been analysed by Y.~Meyer \cite[Ch.~9.2]{Mey93} 
with a method partly based on wavelets and partly 
similar to the proof of Proposition~\ref{ftheta-prop}.
The method was attributed to G.~Freud but without any references.

However, G.~Freud showed in
\cite{Fre62} that an integrable
periodic function $f$ with Fourier series 
$\sum \rho_k\sin(n_k t+\varphi_k)$, $\inf n_{k+1}/n_k>1$ is differentiable 
at a point only if 
$\lim \rho_k n_k=0$, similarly to Theorem~\ref{exp-thm}. His proof was based
on estimates of the differentiated Cesaro means 
and of the corresponding Fej\'er kernel 
(as done also in \cite{ShSt03}), so it applies only 
to periodic functions.

Whereas the purpose in \cite[Ch.~9.2]{Mey93} was to derive
the lack of differentiability of $\operatorname{Re} f_1$, $\operatorname{Im} f_ 1$ with 
wavelet theory, the present paper goes much beyond this.
Eg nowhere-differentiability of $f_\theta$, or $W$, 
is shown to follow directly 
from basic facts in integration theory; cf the introduction. 
And using only $\mathcal{F}$, differentiability was in Theorem~\ref{exp-thm} linked
to the growth of the frequencies $b_j$.
Finally, the removal of the condition $\liminf b_{j+1}/b_j>1$ 
in Theorem~\ref{diff-thm} seems to be a novelty, which yields that the growth
of the frequency \emph{increments} $\Delta b_j$ is equally important.

\subsection*{Acknowledgement} I am grateful to Professor L.~Rodino and
Professor H.~Cornean for asking me to
publish this work; and to an anonymous referee for pointing out 
the reference \cite{BaDu92}.

%

\begin{thebibliography}{McC53}

\bibitem[BD92]{BaDu92}
A.~Baouche and S.~Dubuc, \emph{La non-d{\'e}rivabilit{\'e} de la fonction de
  {W}eierstrass}, L'Enseignement de Math. \textbf{38} (1992), 89--94.

\bibitem[CU07]{ChUb07}
F.~Chamizo and A.~Ubis, \emph{Some {F}ourier series with gaps}, J. Anal. Math.
  \textbf{101} (2007), 179--197.

\bibitem[Dui91]{Dui91}
J.~J. Duistermaat, \emph{Self-similarity of ``{R}iemann's nondifferentiable
  function''}, Nieuw Arch. Wisk. (4) \textbf{9} (1991), no.~3, 303--337.

\bibitem[Fre62]{Fre62}
G.~Freud, \emph{\"{U}ber trigonometrische {A}pproximation und {F}ouriersche
  {R}eihen}, Math. Z. \textbf{78} (1962), 252--262.

\bibitem[Har16]{Har16}
G.~H. Hardy, \emph{Weierstrass's non-differentiable function}, Trans. Amer.
  Math. Soc. \textbf{17} (1916), no.~3, 301--325.

\bibitem[Hol95]{Hol95}
M.~Holschneider, \emph{Wavelets}, Oxford Mathematical Monographs, The Clarendon
  Press Oxford University Press, New York, 1995, An analysis tool, Oxford
  Science Publications.

\bibitem[Hyk01]{Hyk01}
M.~Hyk{\v{s}}ov{\'a}, \emph{Bolzano's inheritance research in {B}ohemia},
  Mathematics throughout the ages ({H}olb\ae k, 1999/{B}rno, 2000), D\v ej.
  Mat./Hist. Math., vol.~17, Prometheus, Prague, 2001, pp.~67--91.

\bibitem[Jaf97]{Jaf97}
S.~Jaffard, \emph{Old friends revisited: the multifractal nature of some
  classical functions}, J. Fourier Anal. Appl. \textbf{3} (1997), 1--22.

\bibitem[Joh08]{JJ08vfm}
J.~Johnsen, \emph{{Type {$1,1$}-operators defined by vanishing frequency
  modulation}}, New Developments in Pseudo-Differential Operators (L.~Rodino
  and M.~W. Wong, eds.), Operator Theory: Advances and Applications, vol. 189,
  Birkh{\"a}user, 2008, pp.~201--246.

\bibitem[Leb08]{Leb08}
H.~Lebesgue, \emph{Sur la m\'ethode de {M}. {G}oursat pour la r\'esolution de
  l'\'equation de {F}redholm}, Bull. Soc. Math. France \textbf{36} (1908),
  3--19.

\bibitem[Lut86]{Lut86}
W.~Luther, \emph{The differentiability of {F}ourier gap series and
  ``{R}iemann's example'' of a continuous, nondifferentiable function}, J.
  Approx. Theory \textbf{48} (1986), 303--321.

\bibitem[McC53]{McC53}
J.~McCarthy, \emph{An everywhere continuous nowhere differentiable function},
  Amer. Math. Monthly \textbf{60} (1953), 709.

\bibitem[Mey93]{Mey93}
Y.~Meyer, \emph{Wavelets}, Society for Industrial and Applied Mathematics
  (SIAM), Philadelphia, PA, 1993, Algorithms \& applications.

\bibitem[RY90]{RiYo90}
I.~Richards and H.~Youn, \emph{Theory of distributions. {A} nontechnical
  introduction}, Cambridge University Press, Cambridge, 1990.

\bibitem[Sch66]{Swz66}
L.~Schwartz, \emph{Th{\'e}orie des distributions}, revised and enlarged ed.,
  Hermann, Paris, 1966.

\bibitem[SS03]{ShSt03}
E.~M. Stein and R.~Shakarchi, \emph{Fourier analysis}, Princeton Lectures in
  Analysis, vol.~1, Princeton University Press, Princeton, NJ, 2003.

\bibitem[Tak03]{Tak03}
T.~Takagi, \emph{A simple example of the continous function without
  derivative}, Phys.-Math. Soc. Japan \textbf{1} (1903), 176--177, \emph{The
  Collected Papers of Teiji Takagi}, S. Kuroda, Ed.,Iwanami (1973), 5-6.

\bibitem[Thi03]{Thi03}
J.~Thim, \emph{Continuous nowhere differentiable functions}, Master's thesis,
  Lule{\aa} University of Technology, Sweden, 2003.

\bibitem[Wen02]{Wen02}
Liu Wen, \emph{A nowhere differentiable continuous function constructed by
  infinite products}, Amer. Math. Monthly \textbf{109} (2002), 378--380.

\bibitem[Zyg59]{Zygm}
A.~Zygmund, \emph{Trigonometric series. 2nd ed. {V}ols. {I}, {II}}, Cambridge
  University Press, New York, 1959.

\end{thebibliography}
%

\end{document}